\def\jobis#1{FF\fi
  \def\predicate{#1}%
  \edef\predicate{\expandafter\strip@prefix\meaning\predicate}%
  \edef\job{\jobname}%
  \ifx\job\predicate
}
\if\jobis{proposal}%
\DeclareMathOperator{\Supp}{Supp}
 \newcommand{\N}{\mathbb N}
 \newcommand{\Q}{\mathbb Q}
 \newcommand{\R}{\mathbb R}
 \newcommand{\bir}{\dashrightarrow}
 \numberwithin{equation}{subsection}
 \numberwithin{footnote}{subsection}
 \newtheorem{cor}[subsection]{Corollary}
 \newtheorem{thm}[subsection]{Theorem}
\newtheorem{claim}[subsection]{Claim}
 \newtheorem{defn}[subsection]{Definition}
 \newcommand{\ke}[1]{$\acute{\mbox{e}}$}
 \newcommand{\ku}[1]{$\acute{\mbox{u}$}}
 \newcommand{\kl}[1]{$\acute{\mbox{l}}$}
 \newcommand{\kh}[1]{$\acute{\mbox{h}}$}
 \newcommand{\kr}[1]{$\acute{\mbox{r}}$}
 \newcommand{\kx}[1]{$\acute{\mbox{x}}$}
 \newcommand{\ki}[1]{${\^\i}$}
\title{On termination of log flips in dimension four}
\author{Caucher Birkar}
\date{\today}
\begin{document}
\maketitle

\begin{abstract}
We prove the termination of $4$-fold log flips for klt pairs of Kodaira dimension $\kappa\ge 2$.
\end{abstract}

%\tableofcontents
%%%%%%%%%%%%%%%%%%%%%%%%%%
%%%%%%%%%%%%%%%%%%%%%%%%%%

\section{Introduction}

We work over an algebraically closed field $k$ of characteristic zero. See section 2 for notations.

Let $(X/Z,B)$ be a klt pair. We say that $(X/Z,B)$ has \emph{log canonical dimension} $\lambda(X/Z,B)$ 
if $(X/Z,B)$ has a log minimal model $(\overline{X}/Z,\overline{B})$ such that $K_{\overline{X}}+\overline{B}$ is semi-ample/$Z$ and its log canonical model 
has dimension $\lambda(X/Z,B)$. If $(X/Z,B)$ does not have such a log minimal model, we let $\lambda(X/Z,B)=-\infty$. On the other hand, when $B$ is a $\Q$-divisor, we define the Kodaira dimension 
$$
\kappa(X/Z,B):=\kappa((K_X+B)|_F)+\dim Z
$$ 
where $F$ is the generic fibre of the surjective morphism $X\to Z$ and $\kappa((K_X+B)|_F)$ is the usual Kodaira dimension of the divisor $(K_X+B)|_F$ on $F$. If the minimal model conjecture and the abundance conjecture hold, then of course $\lambda(X/Z,B)=\kappa(X/Z,B)$. 
 
Though the minimal model conjecture is settled in dimension $4$ [\ref{B2}][\ref{B3}][\ref{Sh}] but the termination problem is known only in some special cases. The following theorem is another step in this direction. 

\begin{thm}\label{t-main}
Let $(X/Z,B)$ be a klt pair of dimension $4$ such that $\lambda(X/Z,B)\ge 2$. Then, any sequence of log flips 
starting with $(X/Z,B)$ terminates.
\end{thm}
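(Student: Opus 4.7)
The plan is to combine the existence of a log minimal model with the Iitaka fibration of lower relative dimension and special termination in dimension $4$, reducing termination to a fiberwise question in dimension $\le 3$.

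By assumption, $(X/Z,B)$ admits a log minimal model $(\bar X/Z,\bar B)$ with $K_{\bar X}+\bar B$ semi-ample over $Z$. Let $g\colon\bar X\to T$ be the contraction to the log canonical model over $Z$, so that $\dim T=\lambda\ge 2$, $K_{\bar X}+\bar B\sim_{\Q,T}0$, and $K_{\bar X}+\bar B=g^*H$ for an ample $\Q$-divisor $H$ on $T$. Since $\dim X=4$, the generic fibre of $g$ has dimension at most $2$. For any sequence of log flips $X=X_0\bir X_1\bir\cdots$ over $Z$, I would consider the induced birational maps $\phi_i\colon X_i\bir\bar X$ and the compositions $h_i=g\circ\phi_i\colon X_i\bir T$.

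The first main step is to show, via common resolutions and the Negativity Lemma, that after finitely many initial flips every flipping curve $C\subset X_i$ must be $h_i$-vertical. If $C$ were $h_i$-horizontal, then on a common resolution $p\colon W\to X_i$, $q\colon W\to\bar X$, one would have $q^*(K_{\bar X}+\bar B)\cdot\tilde C>0$ (since $H$ is ample and $(g\circ q)_*\tilde C\ne 0$), while the Negativity Lemma forces the difference $p^*(K_{X_i}+B_i)-q^*(K_{\bar X}+\bar B)$ to be effective and supported on a controlled set of $\phi_i$-exceptional divisors; a computation then yields $(K_{X_i}+B_i)\cdot C<0$ only when $\tilde C$ lies in the support of such a divisor. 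The number of such divisors on $X_i$ is non-increasing along the flip sequence and bounded in terms of $(\bar X,\bar B)$, so the horizontal regime must end in finitely many steps.

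In the resulting vertical regime, after resolving indeterminacies and working relatively over $T$, termination reduces to a fiberwise problem: on the generic fibre it is a sequence of flips in dimension $\le 2$ (none of which exist); on divisorial fibres it becomes flips in dimension $\le 3$, where termination is classical; and the interaction between strata is handled by Shokurov's special termination in dimension $4$. The main obstacle is precisely this last reduction: since each $\phi_i$ is only rational, the induced operations on fibres do not constitute a genuine relative MMP over $T$, so one must extract a monotone termination invariant — for instance a Shokurov-type difficulty function on $(X_i,B_i)$ counting divisors with small log discrepancy, together with careful bookkeeping of how they transform under flips — that is bounded in terms of geometric data of $(\bar X,\bar B)$ and $g$.
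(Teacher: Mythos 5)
Your proposal has two genuine gaps, and the second is exactly the hard part of the theorem. In the ``horizontal regime'' step, even granting that the fixed $\overline{X}$ stays a log minimal model of every $X_i$ (this itself needs a lemma: the inequalities $a(D,X_1,B_1)\le a(D,X_i,B_i)$ and $a(D,X_1,B_1)\le a(D,\overline{X},\overline{B})$ point the same way and do not combine to give $a(D,X_i,B_i)\le a(D,\overline{X},\overline{B})$), your negativity computation only shows that a horizontal flipping curve lies in the image of $\Supp\bigl(p^*(K_{X_i}+B_i)-q^*(K_{\overline{X}}+\overline{B})\bigr)$, i.e.\ in the locus where $X_i\bir \overline{X}$ is not an isomorphism. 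The fact that the number of such divisors is non-increasing and bounded does not prevent infinitely many flips whose flipping curves sit inside (the images of) the \emph{same} divisors; confinement of flipping loci to a fixed divisorial set never by itself terminates a sequence. So ``the horizontal regime must end in finitely many steps'' is unproved.

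In the ``vertical regime'' you concede the key point yourself: there is no morphism $X_i\to T$, only a rational map, so a flip with flipping curve vertical over the $\ge 2$-dimensional base is still a genuinely $4$-dimensional operation and is not a flip of the $\le 3$-dimensional fibres; moreover Shokurov's special termination concerns lc centres of $\rddown{B}$, which a klt pair does not have, so it gives no handle on the ``strata'' of a rational fibration; and the Shokurov-type difficulty counting divisors of small log discrepancy is precisely what fails in dimension $4$, because flips can create $2$-dimensional flipped loci. The monotone invariant you say ``one must extract'' is the missing proof, not a detail. The paper's route is different and fills exactly this hole: add a small multiple of $M\sim_\R K_X+B$ to $B$ and use Alexeev--Hacon--Kawamata to truncate to flips of type $(1,2)$ whose flipped surfaces $V_i$ are not contained in $\Supp B_i$; take a log minimal model $\overline{X}_i$ of \emph{each} $X_i$ (not one fixed model), use that they all share one log canonical model $S$ with $\dim S\ge 2$ (this is where $\lambda(X/Z,B)\ge 2$ enters), prove via the Claim --- whose $\dim S=2$ case needs Kawamata's theorem that flops connect minimal models --- that an effective ample$/Z$ divisor $H$ on $S$ can be chosen with $\psi_i(\overline{V}_i)\subseteq \Supp H$ for infinitely many $i$, and then get a contradiction by applying Alexeev--Hacon--Kawamata once more to $(X_1/Z,B_1+\delta N_1)$ where $N_1\sim_\R K_{X_1}+B_1$ is the transform of $\psi_i^*H$. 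None of these ingredients has a counterpart in your outline, so as it stands the argument does not prove the theorem.
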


\begin{cor}
Let $(X/Z,B)$ be a klt pair of dimension $4$ such that $B$ is a $\Q$-divisor and $\kappa(X/Z,B)\ge 2$. 
Then, any sequence of log flips starting with $(X/Z,B)$ terminates.
\end{cor}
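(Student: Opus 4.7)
The plan is to derive the corollary from Theorem~\ref{t-main} by verifying that the stronger hypothesis ``$B$ is a $\Q$-divisor and $\kappa(X/Z,B)\ge 2$'' forces $\lambda(X/Z,B)\ge 2$.

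Since $\kappa(X/Z,B)\ge 0$, the existence of log minimal models for klt pairs in dimension $4$ [\ref{B2}][\ref{B3}][\ref{Sh}] produces a log minimal model $(X'/Z,B')$ of $(X/Z,B)$; on it $K_{X'}+B'$ is nef$/Z$ and $\kappa(X'/Z,B')=\kappa(X/Z,B)\ge 2$ by birational invariance of the Kodaira dimension between log terminal models. To conclude $\lambda(X/Z,B)\ge 2$ it then suffices to show that $K_{X'}+B'$ is semi-ample over $Z$, since the log canonical model will have dimension equal to $\kappa(X'/Z,B')\ge 2$.

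For the semi-ampleness I would pass to the relative Iitaka fibration $\phi\colon X'\bir Y/Z$ associated with $K_{X'}+B'$. Under $\kappa\ge 2$ one has $\dim Y\ge 2$, so the general fibre of $\phi$ has dimension at most $2$. On such a fibre, $(K_{X'}+B')$ restricts to a $\Q$-divisor of Kodaira dimension zero on a klt surface, which is numerically trivial by log abundance for klt surfaces. A canonical bundle formula of Ambro-Fujino-Kawamata type then descends $K_{X'}+B'$ to a divisor on a klt pair $(Y,B_Y)$ (after a suitable birational modification of $Y$) satisfying $K_{X'}+B'\sim_\Q \phi^*(K_Y+B_Y)$ with $K_Y+B_Y$ big$/Z$. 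The base-point-free theorem on $Y$ gives semi-ampleness of $K_Y+B_Y$ over $Z$, and pullback yields semi-ampleness of $K_{X'}+B'$ over $Z$.

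The principal obstacle is the semi-ampleness step, since log abundance for nef klt $4$-folds is open in general. The hypothesis $\kappa\ge 2$ is precisely what unlocks the argument: it bounds the relative Iitaka fibre dimension to $\le 2$, where classical abundance applies, and forces the descended log canonical divisor on $Y$ to be relatively big. Once $\lambda(X/Z,B)\ge 2$ is established, Theorem~\ref{t-main} immediately yields termination of any sequence of log flips.
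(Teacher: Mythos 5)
Your overall reduction is exactly the paper's: note that $\kappa((K_X+B)|_F)\ge 0$ gives a log minimal model by the dimension-$4$ existence results, then show the hypothesis $\kappa(X/Z,B)\ge 2$ forces $\lambda(X/Z,B)\ge 2$, and invoke Theorem \ref{t-main}. The difference is only in how the middle step is handled: the paper simply cites [\ref{Fujino}, Theorem 2.8] for the implication $\kappa\ge 2\Rightarrow\lambda\ge 2$, whereas you sketch a proof of that result via the relative Iitaka fibration, surface log abundance on the fibres, and the canonical bundle formula. That is indeed the standard route to Fujino's theorem, so your proposal is morally the paper's proof with the citation unpacked.

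One caution about the unpacked part, since it is where the real content lives: the canonical bundle formula does not descend $K_{X'}+B'$ to a log divisor of a klt pair $(Y,B_Y)$ as you write, but to $K_Y+B_Y+M_Y$ where $M_Y$ is the moduli part, which is only known to be (b-)nef, not effective or a priori absorbable into a klt boundary. Consequently the base-point-free theorem does not apply verbatim: for $D=K_Y+B_Y+M_Y$ one has $D-(K_Y+B_Y)=M_Y$ nef but not necessarily big, so the hypothesis of the theorem fails as stated, and one must argue further (e.g.\ using bigness of $D$ itself to perturb the boundary, or semi-ampleness/abundance of the moduli part on a low-dimensional base). This is precisely the technical point that Fujino's paper handles and the reason the author cites it rather than reproving it. If you intend your paragraph as a proof rather than a pointer to the literature, that step needs to be repaired or replaced by the citation; with the citation in place, your argument coincides with the paper's.
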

\begin{proof}
Let $F$ be the generic fibre of $X\to Z$. From the assumptions it is evident that $\kappa((K_X+B)|_F)\ge 0$. Thus, $(X/Z,B)$ has a log minimal model by [\ref{B2}] or [\ref{B3}] or [\ref{Sh}]. Now apply [\ref{Fujino}, Theorem 2.8] to derive $\lambda(X/Z,B)\ge 2$ and use Theorem \ref{t-main}.
\end{proof}

It should not be difficult to generalise these results to the lc case but for simplicity we only treat the klt case. Finally, we remark that both the ACC conjecture for log canonical thresholds and the ACC conjecture for minimal log discrepancies 
 in dimension $4$ imply stronger versions of Theorem \ref{t-main} [\ref{B}][\ref{Sh-mld's}].

%%%%%%%%%%%%%%%%%%%%%%%%%%%%%%%%%%%%%

\section{Preliminaries}

We work over an algebraically closed field $k$ of characteristic zero.

\begin{defn}
A pair $(X/Z,B)$ consists of normal quasi-projective varieties $X,Z$ over $k$, an $\R$-divisor $B$ on $X$ with
coefficients in $[0,1]$ such that $K_X+B$ is $\mathbb{R}$-Cartier, and a surjective projective morphism $X\to Z$. For a prime divisor $D$ on some birational model of $X$ with a nonempty centre on $X$, $C_XD$ denotes its centre and $a(D,X,B)$
the log discrepancy.
\end{defn}

\begin{defn}\label{d-model}
Let $(X/Z,B)$ be a klt pair. A klt pair $(\overline{X}/Z,\overline{B})$ is called a log minimal model of $(X/Z,B)$ 
if there is a birational map $\phi\colon X\bir \overline{X}/Z$, such that $\phi^{-1}$ does not contract divisors, 
$\overline{B}=\phi_*B$, $\overline{X}$ is $\Q$-factorial, $K_{\overline{X}}+\overline{B}$ is nef/$Z$, and finally 
 for any prime divisor $D$ on $X$ which is exceptional$/\overline{X}$, we have
$$
a(D,X,B)<a(D,\overline{X},\overline{B})
$$

If $K_{\overline{X}}+\overline{B}$ is semi-ample$/Z$, then there is a contraction $\psi\colon \overline{X}\to S/Z$ and 
an ample/$Z$ $\R$-divisor $H$ on the normal variety $S$ such that $K_{\overline{X}}+\overline{B}\sim_{\R} \psi^*H/Z$. 
We call $S$ the log canonical model of $(X/Z,B)$. 
\end{defn}

Note that this definition of log minimal models is equivalent to that of [\ref{B2}, \S 2] and [\ref{B3}, \S 2] in the klt case.

By a log flip we mean the flip of an extremal flipping contraction  for a log divisor [\ref{B}, \S 2]. A log flip is called of type 
$(a,b)$ if the flipping locus has dimension $a$ and the flipped locus has dimension $b$. It is 
well-known that a log flip in dimension $4$ is of type $(1,2), (2,1)$ or $(2,2)$. 

 A sequence of log flips/$Z$ starting with $(X/Z,B)$ is a sequence  $X_i\dasharrow X_{i+1}/Z_i$ where 
$i\in \N$, $(X_1/Z,B_1)=(X/Z,B)$ and  $X_i\to Z_i \leftarrow X_{i+1}$ is a $K_{X_i}+B_i$-flip$/Z$ where  $B_i$ is the 
birational transform of $B$.

%%%%%%%%%%%%%%%%%%%%%%%%%%%%%%%%%%%%%%

\section{Proof of the theorem}

\begin{proof}
\emph{Step 1.} 
Let $X_i\bir X_{i+1}/Z_i$ be a sequence of log flips starting with a klt pair $(X_1/Z,B_1)=(X/Z,B)$ 
of dimension $4$ such that $\lambda(X/Z,B)\ge 2$.  
We can assume that $X$ and so all the $X_i$ are $\Q$-factorial by lifting the sequence  using a $\Q$-factorialisation of $(X/Z,B)$ ( see [\ref{B}, Construction 3.1] or [\ref{B3}, Remark 2.4]).\\ 

\emph{Step 2.} 
Since $\lambda(X/Z,B)\ge 2$, there is an $\R$-divisor $M\ge 0$ such that $K_X+B\sim_\R M/Z$. By adding 
a small multiple of $M$ to $B$, we can assume that $B-\epsilon M\ge 0$ for some $\epsilon >0$. 
 By [\ref{AHK}, Theorem 2.15], for $i\gg 0$, the support of $B_i$ does not contain any $2$-dimensional 
 component of the flipping locus or the flipped locus of the log flip $X_i\bir X_{i+1}/Z_i$. But the support of 
$B_i$ contains the entire flipping locus because the support of $M_i$, the birational transform of $M$, does so 
and because $\Supp M_i\subseteq \Supp B_i$. 
So, by truncating the sequence we can assume that for any $i$ 
the support of $B_i$ does not contain any $2$-dimensional  component of the flipping locus or the 
flipped locus of $X_i\bir X_{i+1}/Z_i$ 
hence all the log flips are of type $(1,2)$. For $i>1$, let $V_{i}\subset X_{i}$ be a $2$-dimensional component of the flipped locus of 
 $X_{i-1}\bir X_{i}/Z_{i-1}$.\\  

\emph{Step 3.} 
By [\ref{B3}] or [\ref{Sh}], $(X_i/Z,B_i)$ has a log minimal model $(\overline{X}_i/Z,\overline{B}_i)$,  
equipped with a birational map $\phi_i\colon X_i\bir \overline{X}_i$, which 
 is obtained from $(X_i/Z,B_i)$ by a sequence of log flips and divisorial contractions. More precisely, 
the log minimal model is obtained by running the LMMP with scaling. In particular, 
the restriction of all the induced birational maps $X_1\bir X_i$ and $X_1\bir \overline{X}_i$ to $U=X_1-\Supp M_1$
is an isomorphism onto the image of $U$. This follows from the fact that any extremal ray contracted 
in the process of $X_1\bir X_i$ or $X_i\bir \overline{X}_i$ should be inside the support of the birational transform of $M_1$. 
Moreover, since $V_i$ is not inside $\Supp M_i$, $V_i$ is not inside the exceptional locus of $\phi_i\colon X_i\bir \overline{X}_i$ hence it 
has a birational transform on $\overline{X}_i$ which we denote by $\overline{V}_i$. 
\\ 

\emph{Step 4.} 
Since all the $(\overline{X}_i/Z,\overline{B}_i)$ are log minimal models of $(X/Z,B)$, and since $\lambda(X/Z,B)\ge 2$, each $K_{\overline{X}_i}+\overline{B}_i$ is semi-ample$/Z$ and it has a log canonical model $S/Z$ of dimension $\ge 2$ 
and a contraction $\psi_i\colon \overline{X}_i\to S$ as in Definition \ref{d-model}. In fact, $S$ does not deepened on $i$ and there is an ample$/Z$ $\R$-divisor $H$, 
independent of $i$,  such that $K_{\overline{X}_i}+\overline{B}_i\sim_\R \psi_i^*H/Z$.\\ 

\emph{Step 5.} 
By the claim below, one can choose the ample$/Z$ $\R$-divisor $H\ge 0$ on $S$ such that $W_i\subseteq \Supp H$ for infinitely 
many $i$ where  $W_i=\psi_i(\overline{V}_i)$. Let $\overline{N}_i=\psi_i^*H$. Then,  there is an effective $\R$-divisor $N_1\sim_\R K_{X_1}+B_1/Z$ on $X_1$ 
such that $\overline{N}_i$ is the birational transform of $N_1$ for every $i$. Let $N_i$ on $X_i$ be the birational transform of $N_1$. Now by  applying [\ref{AHK}] to $(X_1/Z,B_1+\delta N_1)$,  
for some small $\delta>0$, we deduce that $\Supp {N}_i$ does not contain ${V}_i$ for $i\gg 0$. This in turn implies that $\Supp \overline{N}_i$ does not contain $\overline{V}_i$ for 
$i\gg 0$. This is a contradiction because by construction $\overline{V}_i\subseteq \Supp \overline{N}_i$ for infinitely many $i$.   

From now on we deal with stating and proving the claim below. By truncating the sequence we may assume that $X_i$ is terminal at the generic point of $V_i$, i.e. $X_i$ is smooth at the generic point of $V_i$ [\ref{AHK}, Lemma 2.14]. Let $E_i$ be an 
exceptional$/X_i$ prime divisor such that $a(E_i,X_i,B_i)=2$ 
and such that its centre $C_{X_i}E_i=V_i$. Such $E_i$ can be obtained by blow up along $V_i$. 

\begin{claim}\label{claim}
Under the above notation and assumptions (in Step 1-5), we have the following properties:\\ 
(1) $C_{\overline{X}_j}E_i\neq C_{\overline{X}_j}E_j$ if $i<j$,\\
(2) $C_{\overline{X}_1}E_i \nsubseteq\Supp \overline{M}_1$ for any $i$: we use $\overline{M}_1$ to denote the birational transform of $M_1$ (similarly for every $i$),\\
(3) the closure of $\Theta:=\bigcup C_{\overline{X}_1}E_i$ in $\overline{X}_1$ is of dimension $\le 2$,\\
(4) the ample/$Z$ $\R$-divisor $H\ge 0$ can be chosen so that $W_i\subseteq \Supp H$ for infinitely many $i$ 
 where  $W_i=\psi_i(\overline{V}_i)$.
\end{claim}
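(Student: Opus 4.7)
The plan is to establish the four parts in order, relying on the isomorphism-in-codimension-one structure provided by Step~3 together with the monotonicity of log discrepancies along an MMP.

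For (1), I would track the birational transform $V_i^j$ of $V_i$ along the flips $X_i\bir X_{i+1}\bir\cdots\bir X_j$. Since every flipping locus is $1$-dimensional while $V_i$ is $2$-dimensional, $V_i^j$ exists as a $2$-dimensional subvariety of $X_j$ that lies in the isomorphism locus of the last flip $X_{j-1}\bir X_j$; hence $V_i^j$ is not contained in the flipped locus of that flip, while $V_j$ is by definition a component of it, giving $V_i^j\neq V_j$ in $X_j$. The center $C_{X_j}E_i$ equals $V_i^j$ because $E_i$ was obtained by blowing up $V_i$ and the map $X_i\bir X_j$ is an isomorphism at the generic point of $V_i$. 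Passing to $\overline{X}_j$ via $\phi_j$, which by Step~3 is an isomorphism outside $\Supp M_j$, and noting that neither $V_i^j$ nor $V_j$ is contained in $\Supp M_j$ (by Step~2 combined with $\Supp M_j\subseteq\Supp B_j$), one obtains $C_{\overline{X}_j}E_i=\overline{V}_i^j\neq\overline{V}_j=C_{\overline{X}_j}E_j$. Part~(2) is the same argument applied to $X_1\bir\overline{X}_1$: since $V_i\nsubseteq\Supp M_i$, the birational transforms $V_i^1$ and $\overline{V}_i^1$ avoid $\Supp M_1$ and $\Supp\overline{M}_1$ respectively, and $C_{\overline{X}_1}E_i=\overline{V}_i^1$.

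For (3), the crucial geometric input is that each $V_i$ is contracted by the small contraction $X_i\to Z_{i-1}$ to a single point, so its image in $Z$ is a single point $p_i\in Z$. Consequently $\overline{V}_i^1$ sits inside the fiber of $\overline{X}_1\to Z$ over $p_i$, and $W_i=\psi_1(\overline{V}_i^1)$ lies in the fiber of $S\to Z$ over $p_i$. Combined with the log-discrepancy equality $a(E_i,\overline{X}_1,\overline{B}_1)=2$, which forces $\overline{V}_i^1$ to occupy a very restricted position with respect to $K_{\overline{X}_1}+\overline{B}_1$, I would bound the closure of $\bigcup\overline{V}_i^1$ in $\overline{X}_1$ by dimension~$2$ via a Noetherian/boundedness argument applied to the positive-dimensional fibers of $\overline{X}_1\to S$ and the exceptional locus of $\psi_1$. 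Once (3) is in place, (4) follows: applying $\psi_1$ to $\Theta$ yields that the closure of $\bigcup W_i$ in $S$ is at most $2$-dimensional, and since each $W_i$ maps to a single point in $Z$ while $S\to Z$ is surjective with $\dim S\geq 2$, this closure is a proper subvariety of $S$, so one can choose an effective ample$/Z$ $\R$-divisor $H\geq 0$ on $S$ whose support contains it, giving $W_i\subseteq\Supp H$ for all relevant $i$.

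The main obstacle is part~(3): parts (1), (2) and the final bookkeeping in (4) are essentially formal consequences of the isomorphism-in-codimension-one structure, but bounding the Zariski closure of the \emph{infinite} collection of distinct $2$-dimensional subvarieties $\overline{V}_i^1$ by dimension~$2$ is a genuine geometric statement. I expect the proof to exploit the combination of each $V_i$ being contracted over $Z$, the minimal log discrepancy condition $a(E_i,X_i,B_i)=2$ at the $2$-dimensional center $V_i$, and $\dim S\geq 2$, in order to confine the centers $\overline{V}_i^1$ to a fixed $2$-dimensional sublocus of $\overline{X}_1$ rather than letting them spread into a positive-dimensional family of fibers.
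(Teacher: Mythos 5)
Parts (2) and (3) of your outline rest on a transform-tracking step that fails. Going \emph{forward} from $X_i$ the surface $V_i$ survives (flipping loci are curves), so your argument for (1) is essentially the paper's. But going \emph{backward} it does not: $V_i$ is a $2$-dimensional component of the flipped locus of $X_{i-1}\bir X_i$, so the inverse map contracts it; there is no $2$-dimensional transform ``$V_i^1$'' on $X_1$. In fact $C_{X_{i-1}}E_i$ lies inside the flipping locus of $X_{i-1}\bir X_i$, hence inside $\Supp M_{i-1}$, and tracking back one gets $C_{X_1}E_i\subseteq \Supp M_1$ --- the opposite of what you assert. For the same reason the identification $C_{\overline{X}_1}E_i=\overline{V}_i^1$ is unjustified: the map $\overline{X}_i\bir \overline{X}_1$ may contract $\overline{V}_i$, and the paper's proof of (4) must explicitly handle the case $\dim C_{\overline{X}_1}E_i\le 1$. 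Statement (2) is genuinely about $\overline{X}_1$ and is proved in the paper by a log-discrepancy comparison between the two minimal models: $a(E_i,\overline{X}_1,\overline{B}_1+\tau\overline{M}_1)=a(E_i,\overline{X}_i,\overline{B}_i+\tau\overline{M}_i)$ for $0\le\tau\ll 1$ (any two log minimal models of the same klt pair are crepant), the right side being independent of $\tau$ because $\overline{V}_i\nsubseteq\Supp\overline{M}_i$; this forces $C_{\overline{X}_1}E_i\nsubseteq\Supp\overline{M}_1$. Likewise your (3) is not established: confining each center to a fiber of $\overline{X}_1\to Z$ over $p_i$ bounds nothing, since the points $p_i$ may be Zariski dense in $Z$ (and $Z$ may even be a point), and the appeal to a ``Noetherian/boundedness argument'' is not an argument. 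The actual reason is that $a(E_i,X_1,B_1)<2=a(E_i,\overline{X}_1,\overline{B}_1)$ (equivalently, (2) together with $C_{X_1}E_i\subseteq\Supp M_1$), so every $C_{\overline{X}_1}E_i$ lies in the locus where $X_1\bir\overline{X}_1$ fails to be an isomorphism, which has dimension $\le 2$ because $\phi_1^{-1}$ contracts no divisors.

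The decisive gap is (4). Your deduction works only when $\dim S\ge 3$, where (3) does give a proper closed subset of $S$ and ampleness of $H/Z$ lets one choose an effective representative containing it --- exactly the easy case the paper dispatches in one line. In the remaining case $\dim S=2$ (which includes $Z=\Spec k$), the observation that each $W_i$ sits in a fiber of $S\to Z$ gives nothing: infinitely many such fibers can be dense in $S$, so the closure of $\bigcup W_i$ need not be proper and no single $H$ can be chosen this way. This is the heart of the claim, and the paper handles it by a completely different mechanism: using [\ref{AHK}, Lemma 1.5] on a terminal crepant model of $(\overline{X}_1/Z,\overline{B}_1)$ (after perturbing by a divisor containing $\Theta$), then Kawamata's theorem [\ref{Kawamata}] that $\overline{X}_1\bir\overline{X}_i$ decomposes into flops over $S$, to conclude $\dim W_i\le 1$ for $i\gg 0$, followed by the minimality argument with the surface $\Sigma$ and the surfaces $\Gamma_i''$ showing that either $\{W_i\}$ is finite or infinitely many $W_i$ lie in the image of $\Supp\overline{M}_1$, which is what permits the choice of $H$. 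None of this is present in your outline, so the claim is not proved as it stands.
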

\emph{Proof.} 
(1) This follows from the fact that $C_{{X}_j}E_i\neq C_{{X}_j}E_j$ on $X_j$ if $i<j$. 

(2)  By construction, $C_{\overline{X}_i}E_i\nsubseteq \Supp \overline{M}_i$ because $C_{{X}_i}E_i\nsubseteq \Supp {M}_i$. On the other hand, 
$$
a(E_i,\overline{X}_1,\overline{B}_1+\tau \overline{M}_1)= a(E_i,\overline{X}_i,\overline{B}_i+\tau \overline{M}_i)
$$ 
for any $0\le \tau \ll 1$ and this is possible only if $C_{\overline{X}_1}E_i\nsubseteq \Supp \overline{M}_1$. 

(3)  This is obvious since $\phi^{-1}_1\colon \overline{X}_1\bir X_1$ is birational and it does not contract any divisors. 

(4) If $\dim S\ge 3$, this follows from (3). So, we assume that $\dim S=2$. 
By construction, $a(E_i,\overline{X}_1,\overline{B}_1)=a(E_i,\overline{X}_i,\overline{B}_i)=2$ for all $i$. This could happen only if 
on a terminal crepant model $(\overline{X}_1^t/Z,\overline{B}_1^t)$ of $(\overline{X}_1/Z,\overline{B}_1)$, almost 
all $C_{\overline{X}_1^t}E_i$ are surfaces [\ref{AHK}, Lemma 1.5]. Though [\ref{AHK}, Lemma 1.5] deals only with log discrepancy $<2$ but we still can use it by taking a 
divisor $L\ge 0$ containing $\Theta$ and applying the lemma to $(\overline{X}_1/Z,\overline{B}_1+\tau L)$ for some small $\tau>0$.

By [\ref{Kawamata}], the induced birational map $\overline{X}_1\bir \overline{X}_{i}$ is 
decomposed into a sequence $Y_j\bir Y_{j+1}/T_j$ of flops$/S$ in which $Y_1=\overline{X}_1$ and $Y_m=\overline{X}_{i}$. 
 When $i\gg 0$, the centre of $E_i$ should be in the flopping locus somewhere in the sequence because 
of (1) and (3). Suppose that $C_{Y_k}E_i$ is in the flopping locus  
of $Y_k\bir Y_{k+1}/T_k$, that is, the exceptional locus of $Y_k\to T_k$. In particular, $\dim C_{T_k}E_i\le 1$ and since $T_k/S$ we deduce that $\dim W_i\le 1$.\\ 

Now let $\Sigma$ be a minimal (in the sense of inclusion) subvariety of $\Theta$ which contains $C_{\overline{X}_1}E_i$ 
for infinitely many $i$. Let $\sigma=\{i\mid C_{\overline{X}_1}E_i\subseteq \Sigma \}$. Clearly, $\dim \Sigma\le 2$. 
If $\dim \psi_1(\Sigma)\le 1$, we are done, so we may assume that $\Sigma$ is a surface and the restricted  
map $\psi_1\colon \Sigma \to S$ is generically finite. Since $\dim W_i\le 1$ for almost all $i$, $C_{\overline{X}_1}E_i\neq \Sigma$ 
for almost all $i\in \sigma$ which means that $\dim C_{\overline{X}_1}E_i\le 1$ for almost all $i\in \sigma$. This in turn 
implies that $C_{\overline{X}_1^t}E_i$ is inside a prime exceptional divisor $G$ of the morphism $\overline{X}_1^t \to \overline{X}_1$ 
for all $i\in \sigma'$ where $\sigma'\subseteq \sigma$ is some infinite subset. Since $\Sigma$ was chosen to be minimal, $G$ is mapped onto 
$\Sigma$, and so $\dim C_{\overline{X}_1}E_i=1$ for almost all 
$i\in \sigma'$. Since $\Sigma\to S$ is generically finite, $\dim W_i=1$ for almost all $i\in \sigma'$. In addition, we can assume that $\{W_i\}_{i\in \sigma'}$ is not a finite set otherwise we are done.

Choose a large $i\in \sigma'$. So, $\dim C_{\overline{X}_1}E_i=\dim W_i=1$ and as mentioned earlier, the centre of $E_i$ 
is in the exceptional locus of some $Y_k\to T_k$. So, there is a surface $\Gamma_i\subseteq Y_k$ which is in the exceptional 
locus of $Y_k\to T_k$ and such that it is mapped onto $C_{T_k}E_i$ and also onto $W_i$. Now let 
$l\le k$ be the smallest number such that there is a surface $\Gamma'_i$ which is in the exceptional  
locus of $Y_l\to T_l$ and such that it is mapped onto $W_i$. The surface $\Gamma'_i$ is the birational 
transform of a surface $\Gamma''_i$ on $\overline{X}_1$ by the minimality of $l$.

Note that since $\overline{M}_1$ is effective and numerically zero over $S$, it is not horizontal over $S$, that is, it is not mapped onto $S$. On the other hand, if  $\Gamma''_i\subseteq \Supp \overline{M}_1$, then $W_i$ is contained in the image of $\overline{M}_1$ on $S$ hence we may assume that $\Gamma''_i\nsubseteq \Supp \overline{M}_1$  
for almost all $i\in \sigma'$. Therefore, either $\{\Gamma_i''\}_{i\in\sigma'}$ is finite, or $\Gamma_i''$ has a  birational transform on $X_1$ so also on $X_i$ and $\overline{X}_i$, for infinitely many $i\in \sigma'$. The former contradicts the assumption that $\{W_i\}_{i\in \sigma'}$ is not finite, and the latter contradicts the fact that $\Gamma_i''$ is flopped in the sequence $Y_j\bir Y_{j+1}/T_j$.
\end{proof}

\section{Some remarks}

In the proof of Theorem \ref{t-main}, if there were only finitely many log minimal models  $\overline{X}_i$, then the proof would work without the restriction  $\lambda(X/Z,B)\ge 2$. However, in general the number of log  minimal models is not finite but it is conjectured that they are finite up to isomorphism, that is, when we forget about the induced birational relations 
$\overline{X}_i\bir \overline{X}_j$. This weak finiteness follows from a conjecture of Morrison, Kawamata and Totaro about Calabi-Yau fibre spaces [\ref{Totaro}, Conjecture 8.1][\ref{Kawamata2}]. Most probably a modification of the proof of Theorem \ref{t-main} works if one has this weak finiteness.

%%%%%%%%%%%%%%%%%%%%%%%%%%%%%%%%%%%%%%%
%%%%%%%%%%%%%%%%%%%%%%%%%%%%%%%%%%%%%%%%

\vspace{2cm}

\flushleft{DPMMS}, Centre for Mathematical Sciences,\\
Cambridge University,\\
Wilberforce Road,\\
Cambridge, CB3 0WB,\\
UK\\
email: c.birkar@dpmms.cam.ac.uk


\begin{thebibliography}{99}
\bibitem{}\label{AHK}  {V. Alexeev, C. Hacon, Y. Kawamata; {\emph{Termination of many $4$-fold flips.}} Invent. Math.  168  (2007),  no. 2, 433--448.}

\bibitem{}\label{B}  {C. Birkar; {\emph{Ascending chain condition for lc thresholds and
termination of log flips.}} Duke math. Journal, volume 136, no. 1.   }

\bibitem{}\label{B2}  {C. Birkar; {\emph{On existence of log minimal models.}}  arXiv:0706.1792v1 }

\bibitem{}\label{B3}  {C. Birkar; {\emph{Log minimal models according to Shokurov.}}  arXiv:0804.3577v1.}

\bibitem{}\label{Fujino}  {O. Fujino; {\emph{Finite generation of the log canonical ring in dimension four.}} arXiv:0803.1691v1.}

\bibitem{}\label{Kawamata2}  {Y. Kawamata; {\emph{On the cone of divisors of Calabi-Yau fiber spaces.}} Internat. J. Math. 8
(1997), no. 5, 665-687.}

\bibitem{}\label{Kawamata}  {Y. Kawamata; {\emph{Flops connect minimal models.}} Publ. RIMS, Kyoto Univ. 44 (2008), 419-423}

\bibitem{}\label{Sh-mld's}  {V.V. Shokurov; {\emph{Letters of a bi-rationalist. V. Minimal log
discrepancies and termination of log flips.}}  Proc. Steklov Inst. Math.  2004,  no. 3 (246), 315--336.}

\bibitem{}\label{Sh}  {V.V. Shokurov; {\emph{Letters of a bi-rationalist VII: Ordered termination.}} arXiv:math/0607822v2   }

\bibitem{}\label{Totaro}  {B. Totaro; {\emph{Hilbert's fourteenth problem over finite fields, and a conjecture on the cone of curves.}} To appear. }
\end{thebibliography}
\end{document}